\newtheorem{theorem}{Theorem} 
\newtheorem{corollary}[theorem]{Corollary}
\newtheorem{proposition}[theorem]{Proposition}
\newtheorem{lemma}[theorem]{Lemma}
\newtheorem{exam}{Example}
\newenvironment{example}{\begin{exam}\rm}{\end{exam}}
\newtheorem*{rem}{Remarks}
\def\th{$^{\text{th}}$}
\def\mult{\operatorname{mult}}
\def\Z{\mathbb{Z}}
\def\R{\mathbb{R}}
\def\G{\mathcal{G}}
\def\HH{\mathcal{H}}
\def\P{\mathcal{P}}
\def\e{\mathbf{e}}
\def\m{\mathbf{m}}
\def\w{\mathbf{w}}
\def\x{\mathbf{x}}
\def\z{\mathbf{z}}
\begin{document}

\title{Enumeration of Golomb Rulers and Acyclic Orientations of Mixed Graphs}

\author{Matthias Beck}
\address{Department of Mathematics\\
         San Francisco State University\\
         1600 Holloway Ave\\
         San Francisco, CA 94132\\
         U.S.A.}
\email{mattbeck@sfsu.edu}

\author{Tristram Bogart}
\address{Departamento de Matem\'aticas \\
Universidad de los Andes \\
Cra 1 No. 18A-10, Edificio H \\
Bogot\'a, 111711 \\
Colombia}
\email{tc.bogart22@uniandes.edu.co}

\author{Tu Pham}
\address{Department of Mathematics\\
         University of California\\
         900 University Ave\\
         Riverside, CA 92521\\
         U.S.A.}
\email{pham@math.ucr.edu}

\dedicatory{Dedicated to our friend and mentor Joseph Gubeladze on the occation of his 50\th birthday}

\begin{abstract}
A \emph{Golomb ruler} is a sequence of distinct integers (the \emph{markings} of the ruler) whose pairwise differences are distinct.
Golomb rulers can be traced back to additive number theory in the 1930s and have attracted recent research activities on existence problems, such as the search for \emph{optimal} Golomb rulers (those of minimal length given a fixed number of markings).
Our goal is to enumerate Golomb rulers in a systematic way: we study
\[
  g_m(t) := \# \left\{ \x \in \Z^{m+1} : \, 0 = x_0 < x_1 < \dots < x_{ m-1 } < x_m = t , \text{ all } x_j - x_k \text{ distinct} \right\} ,
\]
the number of Golomb rulers with $m+1$ markings and length $t$.
Our main result is that $g_m(t)$ is a quasipolynomial in $t$ which satisfies a combinatorial reciprocity theorem:
$(-1)^{m-1} g_m(-t)$ equals the number of rulers $\x$ of length $t$ with $m+1$ markings, each counted with its \emph{Golomb multiplicity}, which measures how many combinatorially different Golomb rulers are in a small neighborhood of $\x$.
Our reciprocity theorem can be interpreted in terms of certain mixed graphs associated to Golomb rulers; in this language, it is reminiscent of Stanley's reciprocity theorem for chromatic polynomials.
Thus in the second part of the paper we develop an analogue of Stanley's theorem to mixed graphs, which connects their chromatic polynomials to acyclic orientations.
\end{abstract}

\date{25 October 2011}

\maketitle


\section{Introduction}

A \emph{Golomb ruler} is a sequence of $n$ distinct integers whose
pairwise differences are distinct: one can picture an actual ruler with
$n$ markings having the property that all possible measurements are of distinct length.
Golomb rulers have natural applications to phased array radio antennas (see, e.g., \cite{babcock}, possibly the earliest reference on Golomb rulers, though not under that name), x-ray analysis of crystal structures (see, e.g., \cite{bloomgolomb}), and error-correcting codes (see, e.g., \cite{klove}).
Mathematically they can be traced back to additive number theory in the 1930s \cite{erdosturan,sidon}.
In the more recent past, researchers have typically studied existence
problems, such as the search for \emph{optimal} Golomb rulers (those
of minimal length given a fixed number of markings), often with an eye
toward computational complexity. Figure \ref{introfigure}
shows an optimal ruler of length 6. See, e.g., \cite{alperindrobot,dollasrankinmccracken,drakakis,meyerpapakonstantinou,shearer} and the parallel-search project on Golomb rulers at {\tt http://www.distributed.net/Projects}.

\begin{figure}[h]
\begin{center}
\begin{tikzpicture}
\draw[line width=1mm](0,0)node[anchor=south]{0} -- (0,-.3) -- (6,-.3) --(6,0)node[anchor=south]{6};
\draw[line width=1mm](1,-.3)--(1,0)node[anchor=south]{1};
\draw[line width=1mm](4,-.3)--(4,0)node[anchor=south]{4};
\draw[|<->] (0,-.75) -- (1,-.75);
\node at (.5,-1){1};
\draw[<->] (1,-.75) -- (4,-.75);
\node at (2.5,-1){3};
\draw[<->|] (4,-.75) -- (6,-.75);
\node at (5,-1){2};
\draw[|<->|] (0,-1.5) -- (4,-1.5);
\node at (2,-1.75){4};
\draw[|<->|] (1,-2) -- (6,-2);
\node at (3.5,-2.25){5};
\end{tikzpicture}
\end{center}
\caption{An optimal Golomb ruler with four markings.}\label{introfigure}
\end{figure}
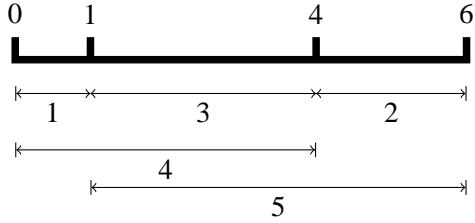

Our goal is to enumerate Golomb rulers in a systematic way. We define
\[
  g_m(t) := \# \left\{ \x \in \Z^{m+1} : \, 0 = x_0 < x_1 < \dots < x_{ m-1 } < x_m = t , \text{ all } x_j - x_k \text{ distinct} \right\} ,
\]
the number of Golomb rulers with $m+1$ markings and length $t$.
Most often it will be more convenient for us to express this counting function in the equivalent form
\begin{equation}\label{measurementrep}
  g_m(t) = \# \left\{ \z \in \Z_{ >0 }^m : \, \begin{array}{l}
    z_1 + z_2 + \dots + z_m = t , \\
    \sum_{ j \in U } z_j \ne \sum_{ j \in V } z_j \ \text{ for all dpcs } U, V \subset [m]
  \end{array} \right\}
\end{equation}
where $[m] := \left\{ 1, 2, \dots, m \right\}$, and dpcs stands for
\emph{disjoint proper consecutive subsets}, i.e., two sets of the form
$\left\{ a, a+1, a+2, \dots, b \right\}$, $\left\{ c, c+1, c+2, \dots, d \right\}$ for some $1 \le a \le b < c \le d \le m$.  

Motivated by \eqref{measurementrep}, we call a point $\z \in \Z_{ \ge
  0 }^m$ with $z_1 + z_2 + \dots + z_m = t$ a \emph{ruler} of
\emph{length} $t$; thus a Golomb ruler is a ruler with positive
entries satisfying $\sum_{ j \in U } z_j \ne \sum_{ j \in V } z_j$ for
all dpcs $U, V \subset [m]$. 

We define a \emph{real Golomb ruler} (with $m+1$ markings and length $t$) as a vector $\z \in \R_{ \ge 0 }^m$ satisfying
\[
  z_1 + z_2 + \dots + z_m = t
  \qquad \text{ and } \qquad
  \sum_{ j \in U } z_j \ne \sum_{ j \in V } z_j \ \text{ for all dpcs } \ U, V \subset [m] \, .
\]
Two real Golomb rulers $\z, \w \in \R_{ \ge 0 }^m$ are \emph{combinatorially equivalent} if for any dpcs $U, V \subset [m]$,
\[
  \sum_{ j \in U } z_j < \sum_{ j \in V } z_j
  \qquad \Longleftrightarrow \qquad
  \sum_{ j \in U } w_j < \sum_{ j \in V } w_j \, ,
\]
in plain English: if their possible measurements satisfy the same order relations.
We define the \emph{Golomb multiplicity} of a ruler $\z \in \Z_{ \ge 0 }^m$ to be the number of combinatorially different real Golomb rulers in an $\epsilon$-neighborhood of $\z$ (viewed as a point in $\R^m$), for sufficiently small $\epsilon > 0$. Thus a Golomb ruler is a ruler with Golomb multiplicity~1.

A \emph{quasipolynomial} of degree $d$ is a function of the form
$c_d(t) \, t^d + c_{ d-1 } (t) \, t^{ d-1 } + \dots + c_0(t)$,
where the $c_j(t)$ are periodic functions in $t$.  The lcm of their
periods is called the \emph{period} of this quasipolynomial.
Our main result about $g_m(t)$ is the following combinatorial
reciprocity theorem. 

\begin{theorem}\label{mainthm}
The Golomb counting function $g_m(t)$ is a quasipolynomial in $t$ of
degree $m-1$ with leading coefficient $\frac{1}{(m-1)!}$. Its evaluation $(-1)^{m-1} g_m(-t)$ equals the number of rulers in $\Z_{ \ge 0 }^m$ of length $t$, each counted with its Golomb multiplicity. Furthermore, $(-1)^{m-1} g_m(0)$ equals the number of combinatorially different Golomb rulers with $m+1$ markings.
\end{theorem}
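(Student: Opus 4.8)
The plan is to realize $g_m(t)$ as the Ehrhart quasipolynomial of an \emph{inside-out polytope} in the sense of Beck and Zaslavsky and then to apply Ehrhart--Macdonald reciprocity region by region. Let $\P \subset \R^m$ be the standard simplex $\left\{ \z \in \R_{\ge 0}^m : z_1 + \dots + z_m = 1 \right\}$, which has dimension $m-1$, and let $\HH$ be the central hyperplane arrangement consisting of the hyperplanes $\sum_{j \in U} z_j = \sum_{j \in V} z_j$, one for each pair of dpcs $U, V \subset [m]$. The normal vector $\mathbbm{1}_U - \mathbbm{1}_V$ of such a hyperplane is never a scalar multiple of $\mathbbm{1}$ (this uses that $U$ and $V$ are proper), so each hyperplane of $\HH$ cuts $\mathrm{aff}(\P)$ in an honest affine hyperplane; thus $(\P, \HH)$ is a genuine inside-out polytope. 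By \eqref{measurementrep}, $g_m(t)$ counts exactly the lattice points lying in the relative interior of $t\P$ and on none of the hyperplanes of $\HH$, i.e.\ $g_m$ is the \emph{open} Ehrhart counting function of $(\P, \HH)$. Standard Ehrhart theory then shows that $g_m$ agrees with a quasipolynomial of degree $\dim \P = m-1$; its leading coefficient equals the relative volume of $\P$, and since $t\P$ contains $\binom{t+m-1}{m-1}$ lattice points this is $\tfrac{1}{(m-1)!}$. (The hyperplanes of $\HH$ influence only lower-order terms.) This proves the first sentence of the theorem.

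For the reciprocity statement, let $R_1, \dots, R_k$ be the closed regions of $(\P, \HH)$: the closures of the connected components of $\mathrm{relint}(\P) \setminus \bigcup \HH$. Each $R_i$ is an $(m-1)$-dimensional rational polytope, and the $R_i$ together cover $\P$. A lattice point in $\mathrm{relint}(t\P)$ off every hyperplane of $\HH$ lies in the relative interior of exactly one $tR_i$, so $g_m(t) = \sum_{i=1}^k \mathrm{ehr}_{R_i^\circ}(t)$, where $R_i^\circ$ denotes the relative interior. Applying Ehrhart--Macdonald reciprocity to each region, $\mathrm{ehr}_{R_i^\circ}(t) = (-1)^{m-1} \mathrm{ehr}_{R_i}(-t)$, and therefore
\[
  (-1)^{m-1} g_m(-t) \;=\; \sum_{i=1}^k \mathrm{ehr}_{R_i}(t) .
\]
The right-hand side counts the lattice points of $t\P$ --- that is, the rulers in $\Z_{\ge 0}^m$ of length $t$ --- with each ruler $\z$ counted once for every closed region $R_i$ that contains it.

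It remains to identify the number of closed regions through a ruler $\z$ with its Golomb multiplicity, and then to evaluate at $t=0$. A real Golomb ruler of length $t$ near $\z$ is precisely a point of $(B_\epsilon(\z) \cap t\P) \setminus \bigcup \HH$, and two such are combinatorially equivalent exactly when they lie in the same chamber of the restricted arrangement; since every chamber meets the convex set $t\P$ in a convex (hence connected) set, the combinatorial classes of real Golomb rulers near $\z$ are in bijection with the chambers meeting $B_\epsilon(\z) \cap t\P$. The one delicate point is that $\z$ may lie on $\partial \P$, so some of these chambers might a priori meet $t\P$ only along its boundary. To rule this out, take any real Golomb ruler $\w$ near $\z$ and push it slightly toward the centroid $\mathbf{c} = (t/m, \dots, t/m)$ of $t\P$: for small $s > 0$ the point $(1-s)\w + s\mathbf{c}$ has all coordinates positive, stays near $\z$, and for each dpcs $U, V$ the sign of $\sum_{j \in U}\bigl((1-s)w_j + sc_j\bigr) - \sum_{j \in V}\bigl((1-s)w_j + sc_j\bigr)$ agrees (for $s$ small enough) with that of $\sum_{j \in U} w_j - \sum_{j \in V} w_j \ne 0$, so $(1-s)\w + s\mathbf{c}$ is a real Golomb ruler in $\mathrm{relint}(t\P)$ combinatorially equivalent to $\w$. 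Hence every combinatorial class near $\z$ has a representative in $\mathrm{relint}(t\P)$, i.e.\ in a unique component of $\mathrm{relint}(t\P) \setminus \bigcup \HH$ whose closure contains $\z$; conversely each such component gives a class, and the correspondence is injective. Thus the number of closed regions through $\z$ is exactly the Golomb multiplicity of $\z$, which finishes the second sentence. Finally $\mathrm{ehr}_{R_i}(0) = 1$ for every $i$, so $(-1)^{m-1} g_m(0) = k$; by the same centroid-perturbation argument (applied now without fixing a nearby $\z$) this equals the number of combinatorial classes of real Golomb rulers with $m+1$ markings, which is the last assertion.

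The conceptual framework is routine once one has the inside-out reciprocity machinery; the step that genuinely needs care is the last one --- matching the region-multiplicity of a ruler with its Golomb multiplicity, and in particular making sure that rulers on the boundary of the simplex (those with some $z_j = 0$) are counted correctly. The perturbation toward the centroid is what carries this out, and it is also the reason the value at $t = 0$ must be discussed separately, since the zero ruler is itself never a real Golomb ruler.
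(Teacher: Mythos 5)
Your proof is correct and takes essentially the same route as the paper: you build the same inside-out polytope $(\Delta_m, \G_m)$, your region-by-region application of Ehrhart--Macdonald reciprocity is exactly the content of the Beck--Zaslavsky reciprocity theorem that the paper cites, and your identification of the region multiplicity of a ruler with its Golomb multiplicity is the same step the paper makes. The only difference is one of explicitness: you unpack the cited reciprocity into the decomposition of $\Delta_m$ into closed regions and carry out the centroid-perturbation argument for rulers on the boundary of the simplex, which the paper leaves implicit.
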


\begin{example} \label{ex:g_3} 
Let $m=3$. Using James B.\ Shearer's Fortran code for Golomb rulers\footnote{Available at {\tt http://www.research.ibm.com/people/s/shearer/programs/grs1.txt}},
we computed the values of $g_3(t)$ for $6 \leq t \leq 35$. Note that
if $\z$ is a Golomb ruler with $m+1$ markings, then there is a
complementary Golomb ruler
$\z'$ given by $z_i' = z_{m-i}$, and $(\z')' = \z$. The rulers $\z$
and $\z'$ are always distinct unless $m=1$. Shearer's code
computes only one of each complementary pair, but in Table \ref{golextable}
we double the output values to account for both. We also note that by
inspection, $g_3(t) = 0$ for $t < 6$. 

\begin{table}[htb]
\begin{tabular}{|c|c||c|c||c|c|} \hline
$t$ & $g_3(t)$ & $t$ & $g_3(t)$ & $t$ & $g_3(t)$ \\ \hline
6 & 2 & 16 & 72 & 26 & 240 \\
7 & 6 & 17 & 96 & 27 & 288 \\
8 & 8 & 18 & 98 & 28 & 288 \\
9 & 18 & 19 & 126 & 29 & 336 \\ 
10 & 16 & 20 & 128 & 30 & 338 \\  
11 & 30 & 21 & 162 & 31 & 390 \\
12 & 34 & 22 & 160 & 32 & 392 \\
13 & 48 & 23 & 198 & 33 & 450 \\
14 & 48 & 24 & 202 & 34 & 448 \\
15 & 72 & 25 & 240 & 35 & 510 \\ \hline
\end{tabular}\caption{The Golomb counting function $g_3(t)$ for $6 \leq t \leq 35$.}\label{golextable}
\end{table}

\noindent
In Section \ref{iosection} we show that the period of the quasipolynomial function
$g_3(t)$ divides 12 and that its leading term is $\frac{1}{2}t^2$. Thus from
the 24 values $g_3(0),g_3(1),\dots,g_3(23)$ we can obtain all of the
coefficients by interpolation. The remaining 12 values
$g_3(24),\dots,g_3(35)$ are consistent with the result of this computation, which is: 

$$g_3(t) = 
\begin{cases}
\frac{1}{2} t^2 - 4t + 10 & \mbox{if } t \equiv 0, \\
\frac{1}{2} t^2 - 3t + \frac{5}{2} & \mbox{if } t \equiv 1, 5, 7, 11, \\
\frac{1}{2} t^2 - 4t + 6 & \mbox{if } t \equiv 2, 10, \\
\frac{1}{2} t^2 - 3t + \frac{9}{2} & \mbox{if } t \equiv 3, 9, \\
\frac{1}{2} t^2 - 4t + 8 & \mbox{if } t \equiv 4, 6, 8.
\end{cases} \pmod{12}$$
In particular, the coefficient $c_1(t)$ has period 2.
The coefficient $c_0(t)$ has period 12 but obeys the same
formula for $t \equiv j \pmod{12}$ as for $t \equiv -j \pmod{12}$ for
each $j$.
Note also that $g_3(0) = 10$; Theorem \ref{mainthm} predicts that there are ten combinatorially different Golomb rulers, as we will see in Section~\ref{iosection}.
\end{example}

We prove Theorem \ref{mainthm} geometrically, making use of the
machinery of \emph{inside-out polytopes} \cite{iop}.
This approach leads us to associate a mixed graph (i.e., a graph that may contain both undirected and directed edges; see, e.g., \cite{hararypalmer})
to the set of all Golomb rulers with a fixed number of markings.
In this language, Theorem \ref{mainthm} has an interpretation in terms
of acyclic orientations of mixed graphs, which might be of
independent interest (Theorem \ref{regionsthm} and Corollary
\ref{acycliccor}, proved in Section
\ref{regionsection}).
This reinterpretation is reminiscent of Stanley's reciprocity theorem for the chromatic polynomial of a graph \cite{stanleyacyclic}. This leads us to a natural analogue of Stanley's theorem for (general) mixed graphs, as follows.

We write a mixed graph $G$ as $G=(V,E,A)$ where $E$
represents the undirected edges and $A$ the directed edges.\footnote{
We assume that $G$ is \emph{simple}: that is, that it contains
neither multiple edges nor loops.}
A \emph{$t$-coloring} of a mixed graph $G$ is a map $c: V \to [t]$. Such a $t$-coloring is \emph{proper} if
\begin{itemize}
\item $c(v) \neq c(w)$ for all $\{v,w\} \in E$, and 
\item $c(v) < c(w)$ for all $(v,w) \in A$. 
\end{itemize} 

As with undirected graphs, the \emph{chromatic
  number} is the minimum $t$ such that $G$ can be
$t$-colored. This parameter was introduced for mixed graphs by Hansen,
Kuplinsky, and de Werra~\cite{HaKuWe}, who discuss bounds, algorithms,
and applications to scheduling problems. 
Let $\chi_G(t)$ denote the number of proper $t$-colorings of $G$.
Sotskov, Tanaev, and Werner~\cite{SoTaWe} showed that this function
(if not identically zero) is a polynomial of degree $|V|$ and computed the two leading coefficients.

An \emph{orientation} of a mixed graph is obtained by keeping each directed edge and orienting each undirected edge; an orientation is \emph{acyclic} if it does not contain any coherently-oriented cycles.
A coloring $c$ and an orientation of $G$ are \emph{compatible} if
\[
  c(u) \le c(v)
  \qquad \Longleftrightarrow \qquad
  u \to v.
\]
Note that $c$ may not be a proper coloring.

Here is our generalization of Stanley's chromatic reciprocity theorem to mixed graphs.  Our approach (in Section
\ref{graphsection}) also
yields a new proof of polynomiality of $\chi_G(t)$.

\begin{theorem} \label{thm:mixedgraph}
Let $G$ be a mixed graph. For a non-negative integer $t$,
$(-1)^{ |V| } \chi_G(-t)$ equals the number of $t$-colorings of $G$, each counted with multiplicity equal to the number of compatible acyclic orientations of $G$.
\end{theorem}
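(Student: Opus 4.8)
The plan is to prove Theorem~\ref{thm:mixedgraph} geometrically, via inside-out polytopes, mirroring the approach already used for Theorem~\ref{mainthm}. Fix the mixed graph $G = (V, E, A)$ with $|V| = n$. Colorings $c \colon V \to [t]$ are lattice points in the dilated cube $t \cdot [0,1]^n$ (after the usual shift), and the properness conditions $c(v) \neq c(w)$ for $\{v,w\} \in E$ and $c(v) < c(w)$ for $(v,w) \in A$ cut out the open region that is the complement of a hyperplane arrangement inside a polyhedron. Specifically, I would work with the arrangement $\HH_G$ consisting of the hyperplanes $x_v = x_w$ for $\{v,w\} \in E$, intersected with the polyhedron $\P_G$ defined by $0 \le x_v$ for all $v$, $x_v \le x_w$ for $(v,w) \in A$, and a suitable normalization (either $x_v \le t$ for all $v$, homogenized, or one extra coordinate). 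Then $\chi_G(t)$ is the open Ehrhart-type quasipolynomial counting lattice points in $t\P_G$ that avoid all hyperplanes of $\HH_G$; but because $\P_G$ is a lattice polytope whose facet normals together with the arrangement are unimodular enough (all hyperplanes are of the form $x_i - x_j = 0$ or $x_i - x_j = \pm \text{integer}$, i.e., the arrangement is a sub-arrangement of the type-$A$ Coxeter/Shi-like arrangement), $\chi_G(t)$ is in fact a \emph{polynomial} in $t$ of degree $n$. This simultaneously recovers the Sotskov--Tanaev--Werner polynomiality.

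The core of the argument is Ehrhart--Macdonald reciprocity for inside-out polytopes (Theorem in \cite{iop}): if $L_{\P,\HH}^{\circ}(t)$ counts lattice points in the open interior of $t\P$ avoiding $\HH$, and $L_{\P,\HH}(t)$ counts lattice points in $t\P$ with multiplicity equal to the number of closed regions of $(\P, \HH)$ in which they lie (equivalently, counted once for each chamber of $\HH$ whose closure contains the point), then $(-1)^{\dim \P}\, L_{\P,\HH}^{\circ}(-t) = L_{\P,\HH}(t)$. First I would verify that $\dim \P_G = n$ (the polytope is full-dimensional in the relevant $n$-dimensional slice, since the directed edges only impose weak inequalities and $G$ is simple), so the sign is $(-1)^n$. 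Next I would identify the open count $L^{\circ}_{\P_G, \HH_G}(t)$ with $\chi_G(t)$: a lattice point in the open polytope has all coordinates strictly between the normalization bounds and strictly satisfies $x_v < x_w$ for directed edges, and avoiding $\HH_G$ means $x_v \neq x_w$ for undirected edges --- exactly a proper $t$-coloring, after checking the boundary bookkeeping so the shift matches $[t]$ rather than $\{0,\dots,t\}$.

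The remaining and most delicate step is to match the \emph{multiplicity} side: I must show that for a lattice point $c$ corresponding to an arbitrary (not necessarily proper) $t$-coloring, the number of closed regions of $(\P_G, \HH_G)$ whose closure contains $c$ equals the number of acyclic orientations of $G$ compatible with $c$. Here is where the combinatorics enters: a chamber of the arrangement $\HH_G$ (refined by the directed-edge inequalities of $\P_G$) near $c$ is determined by a consistent choice, for each undirected edge $\{v,w\}$ on which $c$ is ``tied'' or which is locally relevant, of a sign for $x_v - x_w$, subject to transitivity --- i.e., by a total preorder on $V$ refining the partial information given by $c$ and the directed edges. Such a refinement is exactly an acyclic orientation of $G$: orient $v \to w$ when $c(v) < c(w)$ (forced), and when $c(v) = c(w)$ orient according to the chosen chamber; the directed edges $A$ impose $u \to v$ whenever $c(u) \le c(v)$, and a chamber is nonempty (its closure meets $c$) precisely when these choices contain no coherent directed cycle. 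I would make this precise by a local analysis at $c$: translate $\P_G$ and $\HH_G$ so $c$ is at the origin, pass to the tangent cone, and observe that the regions of the resulting central arrangement biject with compatible acyclic orientations via the sign-vector/transitivity dictionary above --- this is the mixed-graph analogue of the classical fact (used in \cite{stanleyacyclic}) that regions of the graphic arrangement correspond to acyclic orientations. The main obstacle is handling the interaction between the strict inequalities coming from $A$ and the tie-breaking from $E$ cleanly --- in particular ensuring that ``compatible acyclic orientation'' as defined in the excerpt (with $c(u) \le c(v) \Leftrightarrow u \to v$) is literally the chamber-counting quantity, including the degenerate cases where $c$ already violates a directed constraint (in which case both counts should be zero, consistent with $\chi_G$ possibly being identically zero). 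Once this local bijection is established, combining it with inside-out reciprocity and summing over all lattice points $c$ in $t\P_G$ yields $(-1)^n \chi_G(-t) = \sum_{c} \#\{\text{compatible acyclic orientations of } G\}$, which is the claimed formula.
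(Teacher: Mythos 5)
Your proposal is correct and follows essentially the same route as the paper: the same inside-out polytope $\P(G)$ (cut out by $0\le x_v\le 1$ and $x_v\le x_w$ for directed edges) with the arrangement $\HH(G)$ of hyperplanes $x_v=x_w$ for undirected edges, the reciprocity theorem of \cite{iop}, an integrality/unimodularity argument to get genuine polynomials (hence polynomiality of $\chi_G$ and the extension to negative arguments), and the identification of the inside-out multiplicities with compatible acyclic orientations. The only cosmetic difference is that you identify multiplicities by a local tangent-cone chamber analysis at each lattice point, whereas the paper establishes once and for all a global bijection between regions of $(\P(G),\HH(G))$ and acyclic orientations of $G$ and then observes which closed regions contain a given coloring; these amount to the same argument.
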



\begin{corollary}
For $G$ a mixed graph, $(-1)^{ |V| } \chi_G(-1)$ equals the number
of acyclic orientations of $G$.
\end{corollary}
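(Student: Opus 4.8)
The corollary follows from Theorem \ref{thm:mixedgraph} by setting $t=1$, so the plan is essentially to verify that the right-hand side of the theorem degenerates correctly at $t=1$. First I would observe that a $1$-coloring of $G$ is unique: it must send every vertex to the single color $1$, so there is exactly one $1$-coloring $c$. Thus $(-1)^{|V|}\chi_G(-1)$ equals the multiplicity of this single coloring, namely the number of acyclic orientations of $G$ that are compatible with $c$.

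It then remains to check that \emph{every} acyclic orientation of $G$ is compatible with the constant coloring $c\equiv 1$. Recall that compatibility of $c$ with an orientation means $c(u)\le c(v) \Leftrightarrow u\to v$. Here the hypothesis $c(u)\le c(v)$ holds for every pair $u,v$ since all colors equal $1$; so compatibility requires that for every edge we have $u\to v$ \emph{and} $v\to u$ — which is impossible unless we interpret the condition edgewise in the intended way. The correct reading (consistent with the theorem being true) is that the biconditional is imposed only for the edges of the orientation: for each oriented edge, $u\to v$ forces $c(u)\le c(v)$, and conversely whenever $c(u)\le c(v)$ along an edge the edge must be oriented $u\to v$. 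With the constant coloring both $c(u)\le c(v)$ and $c(v)\le c(u)$ hold, but since the orientation picks exactly one direction per edge, and that direction trivially satisfies $c(\text{tail})\le c(\text{head})$, compatibility holds automatically. Hence the compatible acyclic orientations are precisely \emph{all} acyclic orientations, and the count is as claimed.

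I would therefore present the proof as: substitute $t=1$ into Theorem \ref{thm:mixedgraph}; note the unique $1$-coloring; argue that its multiplicity counts all acyclic orientations because the constant coloring is compatible with every orientation (the inequality $c(u)\le c(v)$ is satisfied trivially by the oriented direction of each edge). The only subtlety — and the single point worth stating carefully — is the compatibility check for the constant coloring, i.e.\ making explicit that with all colors equal, the edgewise condition $u\to v \Leftrightarrow c(u)\le c(v)$ imposes no constraint on which of the acyclic orientations we take. Everything else is immediate from the theorem, so I expect no real obstacle; the proof is a one-paragraph specialization.
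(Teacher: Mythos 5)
Your proposal is correct and matches the paper's (implicit) argument: the corollary is just Theorem \ref{thm:mixedgraph} at $t=1$, where the unique constant $1$-coloring is compatible with every acyclic orientation, so the count is exactly the number of acyclic orientations. Your careful reading of the compatibility condition is the right one --- it agrees with how the paper actually uses it in the proof of Theorem \ref{thm:mixedgraph} ($c(u)\le c(v)$ for each oriented edge $u\to v$), so there is no gap.
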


In Section \ref{open} we end with
some open questions about Golomb rulers suggested by our geometric
approach. 


\section{Inside-out Polytopes}\label{iosection}

It is a short step to interpret a Golomb ruler in its measurement representation (as in \eqref{measurementrep}) as an integer lattice point confined to the positive orthant in $\R^m$ and the affine space defined through
\[
  z_1 + z_2 + \dots + z_m = t \, .
\]
More precisely, a Golomb ruler with $m+1$ markings and length $t$ is a lattice point in the $t$'th dilate of
\[
  \Delta_m^\circ := \left\{ \z \in \R_{ >0 }^m :  \, z_1 + z_2 + \dots + z_m = 1 \right\} ,
\]
the $(m-1)$-dimensional open \emph{standard simplex} living in $\R^m$.
However, this description does not yet capture the distinctness
condition: a Golomb-ruler lattice point must also avoid the hyperplanes in $\R^m$ given by equations of the form
\[
  \sum_{ j \in U } z_j = \sum_{ j \in V } z_j
\]
for all dpcs $U, V \subset [m]$.
Let $\G_m$ be the collection of all such hyperplanes.
Thus we have built a geometric setting in which to compute the Golomb counting function as
\begin{equation}\label{golombtoehrart}
  g_m(t) = \# \left( t \Delta_m^\circ \setminus \G_m \cap \Z^m \right) .
\end{equation}
In the language of \cite{iop}, $\left( \Delta_m, \G_m \right)$ is an \emph{inside-out polytope}\footnote{To be 100\% precise, \cite{iop} would compute $g_m(t)$ as $\# \left( \Delta_m^\circ \setminus \G_m \cap \frac 1 t \Z^m \right)$, but this is equivalent as the hyperplanes in $\G_m$ are linear.}
and $g_m(t)$ is the \emph{(open) Ehrhart quasipolynomial} of $\left( \Delta_m, \G_m \right)$.
Figure \ref{iopfigu} shows this geometric setting (viewed in the plane $z_1 + z_2 + z_3 = 1$) for the case $m=3$.

\begin{figure}[h!]
  \begin{center}
\begin{tikzpicture}[scale=.8]
[domain=0:2]
\draw[very thick, fill=gray!20] (90:3cm) -- (210:3cm) -- (330:3cm) -- (90:3cm);
\draw[-,very thick, color=gray](90:3cm) --(270:3cm)node[below left] {$z_1=z_2$};
\draw[-,very thick, color=gray](210:4cm) --(30:3cm)node[above right] {$z_2=z_3$};
\draw[-,very thick, color=gray](330:4cm) --(150:3cm)node[above left] {$z_1=z_3$};
\draw[-,very thick, color=gray](-3,.75) --(3,.75)node[below right] {$z_1 + z_2=z_3$};
\draw[-,very thick, color=gray](134.04:3.09cm) --(285.96:3.09cm)node[below right] {$z_1 = z_2+z_3$};
\end{tikzpicture}
\end{center}
\caption{The inside-out polytope $\left( \Delta_3, \G_3 \right)$.}\label{iopfigu}
\end{figure}

\noindent
The vertices of this inside-out polytope 
are 
$$\left(0,0,1\right),\left(\tfrac{1}{2},0,\tfrac{1}{2}\right),\left(\tfrac{1}{4},\tfrac{1}{4},\tfrac{1}{2}\right),\left(0,\tfrac{1}{2},\tfrac{1}{2}\right),\left(\tfrac{1}{3},\tfrac{1}{3},\tfrac{1}{3}\right),\left(\tfrac{1}{2},\tfrac{1}{4},\tfrac{1}{4}\right),\left(1,0,0\right),\left(\tfrac{1}{2},\tfrac{1}{2},0\right),\left(0,1,0\right) .$$
The lcm of the denominators among all of these coordinates is
twelve. This shows that the period of the quasipolynomial $g_3(t)$
divides twelve, as claimed in Example~\ref{ex:g_3}.   
We can also see that $\G_3$ dissects $\Delta_3$ into ten polygons, which correspond to the combinatorially different Golomb rulers.

The general setup of an inside-out polytope $(\P, \HH)$ consists of a
rational polytope $\P$ and a rational hyperplane arrangement $\HH$ in
$\R^d$; that is, the linear equations and inequalities defining $\P$ and $\HH$ have integer coefficients.
The goal is to compute the counting function
\[
  L_{ \P, \HH }^\circ (t) := \# \left( (\P \setminus \HH) \cap \tfrac 1 t \Z^d \right) ,
\]
and it follows from Ehrhart's theory of counting lattice points in
dilates of rational polytopes \cite{ccd,ehrhartpolynomial} that this
function is a quasipolynomial in $t$ whose degree is $\dim(P)$, whose
(constant) leading coefficient is the normalized lattice volume of $P$,
and whose period divides the lcm of all denominators that appear in
the coordinates of the vertices of $(\P, \HH)$. 
Furthermore, \cite{iop} established the reciprocity theorem
\begin{equation}\label{iopreciprocity}
  L_{ \P^\circ, \HH }^\circ (-t) = (-1)^{ \dim \P } L_{ \overline \P, \HH } (t) \, ,
\end{equation}
where $\P^\circ$ and $\overline \P$ denote the interior and closure of $\P$, respectively, and
\begin{equation}\label{multdef}
  L_{ \P, \HH } (t) := \sum_{ \m \in \frac 1 t \Z^d } \mult_{ \P, \HH } (\m)
\end{equation}
where $\mult_{ \P, \HH } (\m)$ denotes the number of closed regions of $(\P, \HH)$ containing $\m$.
(A \emph{region} of $(\P, \HH)$ is a connected component of $\P \setminus \HH$; a \emph{closed region} is the closure of a region.)
It follows from Ehrhart's work and \eqref{multdef} that
\begin{equation}\label{iopconst}
  L_{ \P, \HH } (0) = \# \text{ regions of } (\P, \HH).
\end{equation}
See \cite{iop} for this and several more properties of inside-out
polytopes. The concept of inside-out polytopes has been applied to a number of combinatorial settings; at the heart of any such application is an interpretation of the regions of $(\P, \HH)$, which we will now give for the Golomb inside-out polytope~$\left( \Delta_m, \G_m \right)$.

\begin{proof}[Proof of Theorem \ref{mainthm}]
The first statement follows immediately from the fact that $\left(
  \Delta_m, \G_m \right)$ is a rational inside-out polytope and 
$\Delta_m$ is a unimodular $(m-1)$-dimensional simplex. 

Viewing \eqref{iopreciprocity} from \eqref{golombtoehrart}, we know
that $(-1)^{m-1} g_m(-t)$ equals the number of rulers in $\Delta \cap
\frac 1 t \Z^m$, each counted with multiplicity given by the number of
closed regions of $\left( \Delta_m, \G_m \right)$ it lies in.
These regions, in turn, are defined by inequalities of the form
\[
  \sum_{ j \in U } z_j < \sum_{ j \in V } z_j
\]
for some dpcs $U, V \subset [m]$, and thus the multiplicity of a ruler $\z$ is precisely given by the number of combinatorially different real Golomb rulers in a neighborhood of~$\z$.

The last statement of Theorem \ref{mainthm} follows from~\eqref{iopconst}.
\end{proof}


\section{The Regions of $\left( \Delta_m, \G_m \right)$}\label{regionsection}

For each positive integer $m$, we define the \emph{Golomb graph}
$\Gamma_m$ to be a mixed graph whose vertices are all proper
consecutive subsets of $[m]$. The underlying graph is complete and an edge $UV$
is directed ($U \to V$) if and only if $U \subset V$. All other edges
are undirected. Acyclic orientations of a Golomb graph allow us to give an interpretation of the regions of $\left( \Delta_m, \G_m \right)$ in the following sense.

\begin{theorem}\label{regionsthm}
The regions of the Golomb inside-out polytope $\left( \Delta_m, \G_m
\right)$ are in one-to-one correspondence with the acyclic
orientations of the Golomb graph $\Gamma_m$ that satisfy 
the relation 
\begin{equation}\label{acycliccondition}
  A \to B
  \qquad \Longleftrightarrow \qquad
  U \to V \, .
\end{equation}
for all proper consecutive subsets $A$ and $B$ of $[m]$ of the form $A
= U \cup W$ and $B = V \cup W$ for some nonempty disjoint sets $U, V, W$
\end{theorem}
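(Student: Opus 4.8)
The plan is to set up an explicit bijection between regions of $\left(\Delta_m,\G_m\right)$ and the acyclic orientations of $\Gamma_m$ satisfying \eqref{acycliccondition}. Given a region $R$ of the inside-out polytope, any interior point $\z\in R$ has the property that for every pair of proper consecutive subsets $A,B\subset[m]$, exactly one of $\sum_{j\in A}z_j<\sum_{j\in B}z_j$ or $\sum_{j\in A}z_j>\sum_{j\in B}z_j$ holds (disjointness of $A,B$ is not needed: if $A\subsetneq B$ then automatically $\sum_{j\in A}z_j<\sum_{j\in B}z_j$ since $\z$ has positive coordinates, and this is exactly the forced direction $A\to B$ in $\Gamma_m$; if $A,B$ are incomparable but overlapping, write $A=U\cup W$, $B=V\cup W$ with $U,V$ disjoint and compare $\sum_U z_j$ with $\sum_V z_j$, which is governed by a hyperplane of $\G_m$). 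So first I would show this assignment is well-defined on regions: the comparison $\sum_A z_j$ versus $\sum_B z_j$ is constant on $R$ because its sign can only change across a hyperplane of $\G_m$, and no such hyperplane meets $R$. Orient $A\to B$ whenever $\sum_A z_j<\sum_B z_j$; this gives an orientation $\mathcal{O}(R)$ of $\Gamma_m$.

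Next I would check that $\mathcal{O}(R)$ is acyclic and satisfies \eqref{acycliccondition}. Acyclicity is immediate because the orientation is induced by the total order on the real numbers $\{\sum_{j\in S}z_j : S \text{ a proper consecutive subset}\}$ (these sums are pairwise distinct for $\z$ in the interior of a region, hence a genuine linear order), and a linearly ordered tournament has no directed cycle. For \eqref{acycliccondition}: if $A=U\cup W$ and $B=V\cup W$ with $U,V,W$ pairwise disjoint and nonempty, then $\sum_A z_j - \sum_B z_j = \sum_U z_j - \sum_V z_j$, so $A\to B \iff U\to V$, exactly as required. Conversely, given an acyclic orientation $\mathcal{O}$ of $\Gamma_m$ satisfying \eqref{acycliccondition}, I would recover a region: acyclicity yields a linear extension, i.e. an assignment of a rank to each proper consecutive subset respecting $\mathcal{O}$; translating each strict inequality $A\to B$ into $\sum_A z_j<\sum_B z_j$ carves out a (possibly empty) open polyhedron inside $\Delta_m^\circ$, and the main point is that it is nonempty. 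Nonemptiness should follow because the inequalities coming from an acyclic orientation are consistent: they are implied by the single linear order, so one can realize them by choosing $\z$ near a generic point — more carefully, I would argue that the map $\z\mapsto(\sum_{S}z_j)_S$ has enough freedom (the relevant sums $\sum_U z_j - \sum_V z_j$ over dpcs $U,V$ are linearly independent enough, or at least the associated arrangement $\G_m$ is essential in the hyperplane $\sum z_j=1$) that every sign vector consistent with \eqref{acycliccondition} and the forced containments is attained. Finally I would verify the two maps are mutually inverse: $\mathcal{O}(R)$ determines which side of each hyperplane of $\G_m$ the region $R$ lies on, and a region is precisely a maximal consistent choice of such sides, so distinct regions give distinct orientations and every valid orientation comes from a region.

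The main obstacle I anticipate is the nonemptiness/surjectivity step: showing that every acyclic orientation of $\Gamma_m$ obeying \eqref{acycliccondition} actually arises from a region, rather than corresponding to an empty cell cut out by inconsistent strict inequalities. The subtlety is that not every sign vector on the hyperplanes of $\G_m$ is realizable — only those coming from a region — and one must show that condition \eqref{acycliccondition}, together with the forced orientations for nested subsets, captures *exactly* the realizability constraints and no fewer. I expect to handle this by observing that the hyperplanes $\sum_U z_j = \sum_V z_j$ for dpcs $U,V$, after restricting to $\sum z_j = 1$, have the property that any point violating none of them induces a consistent total order on all the partial sums $\sum_S z_j$, and conversely any such consistent total order compatible with the $\Z_{>0}$-constraints can be realized; the orientations satisfying \eqref{acycliccondition} are in bijection with exactly these consistent total orders, with the redundancy among hyperplanes (several dpcs pairs defining the same hyperplane, via $A=U\cup W$, $B=V\cup W$) accounted for precisely by \eqref{acycliccondition}. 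Spelling out this correspondence — that \eqref{acycliccondition} is the combinatorial shadow of the linear relations among the defining functionals of $\G_m$ — is the technical heart of the argument.
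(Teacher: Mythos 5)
Your forward direction is fine and matches the paper's: on a region the sign of $\sum_{j\in A} z_j - \sum_{j\in B} z_j$ is constant because no hyperplane of $\G_m$ is crossed, the induced tournament is acyclic because it comes from a linear order on distinct real numbers, and the identity $\sum_{j\in A} z_j - \sum_{j\in B} z_j = \sum_{j\in U} z_j - \sum_{j\in V} z_j$ forces \eqref{acycliccondition}. The genuine gap is the converse, which you yourself label the ``technical heart'' and then only gesture at. Asserting that the inequalities dictated by an acyclic orientation are ``implied by the single linear order,'' or that the sums are ``linearly independent enough,'' or that $\G_m$ is essential, does not show that the open cell $\left\{ \z \in \Delta_m^\circ : \sum_{j\in A} z_j < \sum_{j\in B} z_j \text{ whenever } A \to B \right\}$ is nonempty: essentialness of an arrangement says nothing about which sign vectors are realizable, and abstract consistency of the order is exactly the point in question. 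The worry is real, not hypothetical: for $m=3$ there are $16$ acyclic orientations of $\Gamma_3$ compatible with the directed edges but only $10$ regions, so realizability genuinely fails for orientations violating \eqref{acycliccondition}; moreover, for orderings of sums over \emph{arbitrary} (non-interval) subsets, the analogous cancellation condition is classically known to be insufficient for additive realizability (Kraft--Pratt--Seidenberg), so no genericity or ``enough freedom'' argument can succeed without exploiting the consecutive-interval structure. Your proposal therefore omits precisely the step that makes the theorem nontrivial.

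For comparison, the paper handles this direction by a different device: it regards the given orientation, via Greene's theorem, as a region of the braid arrangement in $\R^{m(m+1)/2-1}$ with coordinates $y_U$ indexed by the proper consecutive subsets, and then transports that region into the $z$-coordinates through the substitution $y_U \mapsto \sum_{j\in U} z_j$, arguing with the full rank of this linear map and finally cutting with $z_1 + z_2 + \dots + z_m = 1$. To complete your write-up you would need either to reconstruct an argument along these lines or to give a direct construction (say, by induction on $m$) of a positive vector realizing any prescribed \eqref{acycliccondition}-compatible order on interval sums; neither appears in your sketch.
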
 


\begin{figure}[h]
\begin{tikzpicture}[scale=1.5]
\tikzstyle{every node}=[draw,shape=circle];
\path (0,0) node (v1) {$1$};
\path (2,0) node (v2) {$2$};
\path (4,0) node (v3) {$3$};
\path (1,1) node (v4) {$12$};
\path (3,1) node (v5) {$23$};
\draw[line width=1pt] (v1) -- (v2)
(v1) .. controls (2,-1) .. (v3)
(v1) -- (v5)
(v2) -- (v3)
(v3) -- (v4)
(v4) -- (v5);

\draw[->,line width=1pt] (v1) -- (v4);
\draw[->,line width=1pt] (v2) -- (v4);
\draw[->,line width=1pt] (v2) -- (v5);
\draw[->,line width=1pt] (v3) -- (v5);

\path (5,0) node (w1) {$1$};
\path (7,0) node (w2) {$2$};
\path (9,0) node (w3) {$3$};
\path (6,1) node (w4) {$12$};
\path (8,1) node (w5) {$23$};
\draw[->,line width=1pt] (w1) -- (w2);
\draw[->,line width=1pt](w1) .. controls (7,-1) .. (w3);
\draw[->,line width=1pt](w1) -- (w5);
\draw[->,line width=1pt](w2) -- (w3);
\draw[->,line width=1pt](w3) -- (w4);
\draw[->,line width=1pt](w4) -- (w5);

\draw[->,line width=1pt] (w1) -- (w4);
\draw[->,line width=1pt] (w2) -- (w4);
\draw[->,line width=1pt] (w2) -- (w5);
\draw[->,line width=1pt] (w3) -- (w5);

\end{tikzpicture}
  \caption{The mixed graph $\Gamma_3$ (left) and an acyclic
    orientation of $\Gamma_3$ (right).}
  \label{G_3andAO}
\end{figure}
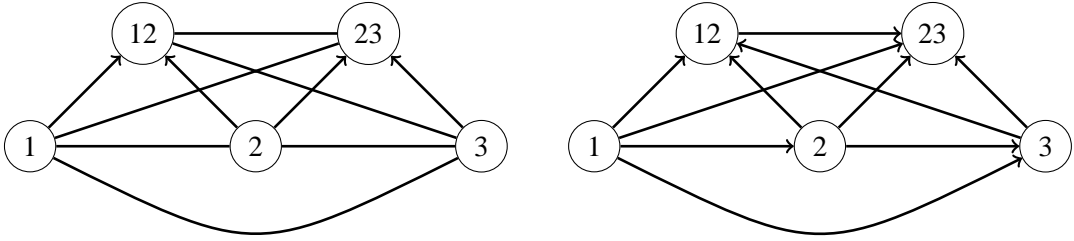 

\begin{proof}
Let $R$ be a region of $\left( \Delta_m, \G_m \right)$.
Thus $R$ is the intersection of $\Delta_m$ with halfspaces defined by inequalities of the form
\begin{equation}\label{halfspace}
  \sum_{ j \in U } z_j < \sum_{ j \in V } z_j
\end{equation}
for some dpcs $U, V \subset [m]$.
So $UV$ is an undirected edge of $\Gamma_m$; assign it the orientation $U \to V$.

The directed edges of $\Gamma_m$ are of the form $UV$ for $U \subset V$, so we still need to orient edges of the form $AB$ where $A = U \cup W$ and $B = V \cup W$ for some nonempty disjoint sets $U, V, W$. We orient these according to \eqref{acycliccondition} (note that the corresponding halfspace defined through $\sum_{ j \in A } z_j < \sum_{ j \in B } z_j$ is identical to that given by \eqref{halfspace}).
The resulting orientation of $\Gamma_m$ is acyclic since a coherently-oriented cycle gives rise to the nonsensical inequality $\sum_{ j \in M } z_j < \sum_{ j \in M } z_j$ for some multiset~$M$.

Conversely, suppose we are given an acyclic orientation of $\Gamma_m$ that satisfies \eqref{acycliccondition}.
Index the variables $y_U$ of a vector in $\R^{ m(m+1)/2 - 1 }$ by the proper consecutive subsets $U$ of $[m]$ and consider the \emph{braid arrangement} given by the hyperplanes $y_U = y_V$ in $\R^{ m(m+1)/2 - 1 }$. 
It is a famous observation of Greene \cite{greeneacyclic} that the acyclic orientations of a graph are in one-to-one correspondence with the region of the corresponding graphical arrangement (in our case, the above braid arrangement, since our underlying graph is complete).
Now consider the function $\R^{ m(m+1)/2 - 1 } \to \R^m$ defined through
\[
  y_U \mapsto \sum_{ j \in U } z_j \, .
\]
This is a linear map of full rank, i.e., the regions of the braid
arrangement, being full-dimensional sets in the domain, are mapped to full-dimensional sets in the range.
Furthermore, since the all-ones vector is in each closed region, we can project further to the affine space defined by $z_1 + z_2 + \dots + z_m = 1$.
\end{proof}

This theorem allows us to rephrase Theorem \ref{mainthm} in terms of acyclic orientations of $\Gamma_m$.
We call a ruler $\z \in \Z_{ \ge 0 }^m$ and an orientation of $\Gamma_m$ \emph{compatible} if
\[
  \sum_{ j \in U } z_j \le \sum_{ j \in V } z_j
  \qquad \Longleftrightarrow \qquad
  U \to V \, .
\]

\begin{corollary}\label{acycliccor}
The evaluation $(-1)^{m-1} g_m(-t)$ equals the number of rulers in $\Z_{ \ge 0 }^m$ of length $t$, each counted with multiplicity equal to the number of compatible acyclic orientations of $\Gamma_m$ that satisfy \eqref{acycliccondition}. Furthermore, $(-1)^{m-1} g_m(0)$, the number of combinatorially different Golomb rulers, equals the number of acyclic orientations of $\Gamma_m$ that satisfy~\eqref{acycliccondition}.
\end{corollary}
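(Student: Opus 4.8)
The plan is to deduce the corollary from Theorem~\ref{mainthm} by transporting each of its assertions along the bijection of Theorem~\ref{regionsthm} between the regions of $\left( \Delta_m, \G_m \right)$ and the acyclic orientations of $\Gamma_m$ satisfying~\eqref{acycliccondition}. Since each closed region of $\left( \Delta_m, \G_m \right)$ is the closure of a unique region, this bijection also identifies closed regions with those orientations, and the remaining point is to recognize, for a ruler $\z$, the condition ``$\z$ lies in a given closed region'' as the condition ``$\z$ and the corresponding orientation are compatible.''

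I would first recall from the proof of Theorem~\ref{mainthm} that, for $t>0$, the Golomb multiplicity of a ruler $\z \in \Z_{\ge 0}^m$ of length $t$ is the number of closed regions of $\left( \Delta_m, \G_m \right)$ that contain $\tfrac1t\z \in \Delta_m$. Next, fix an acyclic orientation $O$ of $\Gamma_m$ satisfying~\eqref{acycliccondition}, and let $R$ be the region it corresponds to. Following the proof of Theorem~\ref{regionsthm}, $R$ consists of the points of $\Delta_m$ satisfying $\sum_{j\in U}z_j<\sum_{j\in V}z_j$ for every dpcs pair that $O$ orients as $U\to V$; the constraints attached to the remaining edges oriented by $O$ are redundant here, because for a directed edge $U\subset V$ the inequality $\sum_{j\in U}z_j\le\sum_{j\in V}z_j$ holds throughout $\Delta_m$, while for an undirected edge $\{A,B\}$ with $A=U\cup W$ and $B=V\cup W$ the inequality $\sum_{j\in A}z_j<\sum_{j\in B}z_j$ is literally $\sum_{j\in U}z_j<\sum_{j\in V}z_j$. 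As $R$ is nonempty, its closure within $\Delta_m$ is cut out by the non-strict versions of these inequalities, so $\tfrac1t\z\in\overline R$ if and only if $\sum_{j\in U}z_j\le\sum_{j\in V}z_j$ whenever $O$ orients an edge $\{U,V\}$ as $U\to V$; that is, if and only if $\z$ and $O$ are compatible. Summing over orientations turns the count of closed regions containing $\tfrac1t\z$ into the count of compatible acyclic orientations of $\Gamma_m$ satisfying~\eqref{acycliccondition}, and substituting into Theorem~\ref{mainthm} yields the first assertion.

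For the second assertion I would invoke the last statement of Theorem~\ref{mainthm}: $(-1)^{m-1}g_m(0)$ is the number of combinatorially different Golomb rulers, which by the definition of combinatorial equivalence is exactly the number of regions of $\left( \Delta_m, \G_m \right)$ (this is also what~\eqref{iopconst} records). By Theorem~\ref{regionsthm} these regions are in bijection with the acyclic orientations of $\Gamma_m$ satisfying~\eqref{acycliccondition}, giving the claim. As a consistency check, one can also specialize the first assertion to $t=0$: the zero ruler is the only ruler of length $0$, and for it $\sum_{j\in U}z_j=\sum_{j\in V}z_j=0$ for every pair of vertices, so every acyclic orientation satisfying~\eqref{acycliccondition} is compatible with it.

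I do not expect a real obstacle, as the argument is essentially a translation; the one step that warrants care is the identification of $\overline R$ with its non-strict inequality description, which relies on the fact that the closure of an arrangement region is the intersection of the closed halfspaces containing it, together with the check (carried out above) that no inequality beyond the dpcs ones actually bounds $R$. One should also remember that rulers with zero entries lie on the boundary of the simplex, so ``closed region'' must be understood as closure taken inside $\Delta_m$ (equivalently, inside the hyperplane $z_1+\dots+z_m=1$), in keeping with the conventions of Section~\ref{iosection}.
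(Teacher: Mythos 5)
Your proposal is correct and follows exactly the route the paper intends (and leaves implicit): combine the multiplicity interpretation from the proof of Theorem~\ref{mainthm} --- a ruler is counted once for each closed region of $\left( \Delta_m, \G_m \right)$ containing it --- with the bijection of Theorem~\ref{regionsthm}, checking that ``lies in the closed region of an orientation'' translates to ``compatible with that orientation,'' and using \eqref{iopconst} for the $t=0$ statement. Your care with the closure of a region (non-strict inequalities, closure taken inside $\Delta_m$) and with the redundant inequalities coming from directed edges and from edges identified via \eqref{acycliccondition} matches the paper's setup, so nothing further is needed.
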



\section{Mixed Graph Coloring Reciprocity}\label{graphsection}

Our proof of Theorem \ref{thm:mixedgraph} closely follows the proof of Stanley's chromatic reciprocity theorem given in \cite{iop}.
Given a mixed graph $G=(V,E,A)$ with $V=[n]$, define the polytope 
\[
  \P(G) := \left\{ \x \in [0,1]^n: \, x_j \le x_k \text{ whenever } (j,k) \in A \right\}
\]
and the hyperplane arrangement $\HH(G)$ consisting of the hyperplanes $x_j
= x_k$ for all $\{v_j,v_k\} \in E$. 
The following result is the mixed-graph analogue of \cite{greeneacyclic}.

\begin{proposition} \label{prop:regions} The regions of $\R^n \setminus \HH(G)$ that
  intersect $\P(G)$ are in bijection with the acyclic
  orientations of~$G$.
\end{proposition}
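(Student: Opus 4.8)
The plan is to set up a correspondence between regions of $\R^n \setminus \HH(G)$ meeting $\P(G)$ and acyclic orientations of $G$, using the classical theory of graphical arrangements as a backbone and then accounting for the directed edges of $G$ separately. First I would recall Greene's result \cite{greeneacyclic}: for the underlying undirected graph $G' = (V, E)$, the regions of the graphical arrangement $\R^n \setminus \HH(G)$ (the hyperplanes $x_j = x_k$ for $\{v_j, v_k\} \in E$) are in bijection with the acyclic orientations of $G'$, where a region $R$ is sent to the orientation defined by $j \to k$ iff $x_j < x_k$ on $R$. The point of the proposition is to determine exactly which of these regions survive after intersecting with $\P(G)$, i.e., after imposing the weak inequalities $x_j \le x_k$ for $(j,k) \in A$ and the box constraints $0 \le x_i \le 1$.

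The key observation is that $\P(G)$ is full-dimensional in $\R^n$ (it always contains a neighborhood of the point all of whose coordinates are $1/2$, since all its defining inequalities are weak and non-strict at that point — or more simply, it has nonempty interior because no two coordinates are forced equal). Hence a region $R$ of $\R^n \setminus \HH(G)$ meets $\P(G)$ if and only if $R \cap \P(G)$ is full-dimensional, which happens if and only if the open region $R$ (an open convex polyhedral cone-like set cut out by strict inequalities $x_j < x_k$ or $x_j > x_k$, one per edge of $E$) is consistent with the weak inequalities $x_j \le x_k$, $(j,k)\in A$: that is, $R$ meets $\P(G)$ iff on $R$ we never have the combination "$x_j > x_k$ strictly forced while $(j,k) \in A$ demands $x_j \le x_k$." Translating through Greene's bijection, the orientation $\sigma$ associated to $R$ has this compatibility property precisely when $\sigma$ extends to an acyclic orientation of the whole mixed graph $G$ — one keeps every directed edge of $A$ and orients each $E$-edge as $\sigma$ does — without creating a coherently oriented cycle. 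I would then argue that the resulting map "region $\mapsto$ acyclic orientation of $G$" is well-defined (the orientation is acyclic: a coherent cycle would force a chain $x_{i_1} \le x_{i_2} \le \cdots$ with at least one strict step returning to the start, impossible) and is a bijection onto the acyclic orientations of $G$, with inverse sending an acyclic orientation to the (nonempty, by acyclicity, via a topological-sort argument) region where the strict inequalities hold.

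I expect the main obstacle to be the careful verification that the map is surjective, i.e., that every acyclic orientation of $G$ arises from some region meeting $\P(G)$: given an acyclic orientation, one must produce a point satisfying all the strict inequalities on $E$-edges and all the weak inequalities on $A$-edges simultaneously and lying in $[0,1]^n$. The natural construction is to take a topological sort of the acyclic orientation and assign to each vertex its position (rescaled into $[0,1]$), but one must check that distinct positions are genuinely forced only along $E$-edges while $A$-edges, which may have their endpoints at equal levels in degenerate orderings, are still satisfied weakly — this requires perturbing within the appropriate region and invoking its full-dimensionality. A secondary subtlety is confirming that two distinct acyclic orientations cannot come from the same region; this follows because the orientation is recovered from the region by reading off the signs of $x_j - x_k$ on $E$-edges and is then uniquely extended by the fixed edges of $A$. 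Once these two points are pinned down, the proposition follows, and it simultaneously supplies the combinatorial backbone for the proof of Theorem \ref{thm:mixedgraph} in the same spirit as the undirected case treated in \cite{iop}.
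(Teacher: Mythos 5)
Your overall strategy coincides with the paper's proof: read an orientation off a region from the sign of $x_j-x_k$ on each $E$-edge, deduce acyclicity from consistency with the inequalities coming from $A$ on points of $\P(G)$, get injectivity because two distinct regions lie on opposite sides of some hyperplane, and get surjectivity by taking a total order (topological sort) compatible with a given acyclic orientation and using the chamber $x_{\pi_1}<\dots<x_{\pi_n}$, which visibly meets $\P(G)$. The paper does not route explicitly through Greene's theorem for the underlying undirected graph, but that is only a framing difference. Also, your worry in the surjectivity step is unnecessary: a topological sort of an acyclic orientation of \emph{all} edges of $G$ gives every vertex a distinct position, so the $A$-edges are satisfied strictly, not just weakly, at a point such as the rescaled permutation point, and no perturbation argument is needed.

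There is, however, one genuinely incorrect step: your justification of the ``key observation'' that $\P(G)$ is full dimensional. The point $(1/2,\dots,1/2)$ does \emph{not} have a neighborhood inside $\P(G)$ whenever $A\neq\emptyset$, because each constraint $x_j\le x_k$ with $(j,k)\in A$ holds with equality there, so that point is on the boundary of $\P(G)$. Moreover, ``no two coordinates are forced equal'' fails if $(V,A)$ contains a directed cycle: then all coordinates along the cycle are forced equal and $\P(G)$ is lower dimensional. In fact $\P(G)$ is full dimensional exactly when the directed part is acyclic, which the paper proves in a separate lemma via a topological sort (exhibiting a full-dimensional order simplex inside $\P(G)$). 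Relatedly, your acyclicity argument (a coherent cycle forces a chain with at least one strict step) only excludes cycles containing at least one $E$-edge; a coherent cycle lying entirely in $A$ yields only weak inequalities, and indeed if $A$ has a directed cycle whose vertices span no $E$-edge, a region of $\HH(G)$ can meet $\P(G)$ even though $G$ has no acyclic orientation, so the proposition implicitly assumes $(V,A)$ acyclic. The paper's own proof carries the same implicit hypothesis (it passes to $\P(G)^\circ$, which is nonempty only in that case, and introduces acyclicity explicitly right after the proposition), so your argument is sound once you state that hypothesis and replace the neighborhood-of-$(1/2,\dots,1/2)$ claim with the topological-sort proof of full dimensionality.
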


\begin{proof}
 Let $R$ be an open region of $\HH(G)$ that intersects $\P(G)$, hence
 also intersects $\P(G)^\circ$. Then for each hyperplane
$x_i = x_j$ in $\HH(G)$, $R$ is entirely on one side, hence determines
an orientation $\alpha$ as follows: for each edge $\{ i, j \} \in E$, choose $i \to j$ if $x_i < x_j$ in $R$, or $j \to i$ if $x_i > x_j$
in $R$.  In the set $R \cap \P(G)^\circ$, we also have $x_i < x_j$ for
all $(i,j) \in A$. Since $R$, being open, intersects the interior of
$\P(G)^\circ$, all of these inequalities must be consistent. That is, $\alpha$ is acylic. 

Since two different regions must be on opposite sides of at
least one hyperplane, they determine different orientations. Finally,
if $\alpha$ is an ayclic orientation of $G$, choose any total order $\pi$
of the vertices that is consistent with $\alpha$, including the fixed
directions on the arrows $A$. Then if $R$ is the region containing the points of
$\R^n$ that satisfy $x_{\pi_1} < \dots < x_{\pi_n}$, then $R$ maps to
$\alpha$ under the rule given above. That is, the map from regions to
orientations is both injective and surjective. 
\end{proof}

We say that $G$ is \emph{acylic} if the directed graph $(G,A)$ contains no directed cycles. Note that in this case there exists at least one total order on the vertices compatible with the directed edges. Orienting the undirected edges according to such an order, we can always obtain an acyclic orientation on all of $G$.   
Note that if $G$ is not acyclic, then there exist no proper
$t$-colorings for any $t$. On the other hand, if $G$ is acyclic then
there exist proper $t$-colorings for all sufficiently large $t$, and in particular for all $t \geq n$.  

\begin{lemma}
If the mixed graph $G$ is acyclic then the polytope $\P(G)$ is full dimensional. 
\end{lemma}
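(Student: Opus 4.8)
The plan is to exhibit explicitly a point in the topological interior of $\P(G)$ as a subset of $\R^n$. First I would record the relevant description of the polytope: $\P(G)$ is cut out by the finitely many linear inequalities $0 \le x_j$ and $x_j \le 1$ for $j \in [n]$, together with $x_j \le x_k$ for each $(j,k) \in A$; the undirected edge set $E$ plays no role in $\P(G)$ at all. Since $G$ is acyclic, the directed graph $(V,A)$ contains no directed cycle, so — as noted in the paragraph just before the lemma — there is a total order $\pi = (\pi_1, \dots, \pi_n)$ of the vertices that is compatible with $A$, meaning that whenever $(j,k) \in A$ the vertex $j$ precedes $k$ in $\pi$.

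Next I would set $x_{\pi_i} := \frac{i}{n+1}$ for $i = 1, \dots, n$, producing a point $\x \in \R^n$, and check that every defining inequality of $\P(G)$ holds \emph{strictly} at $\x$: we have $0 < x_j < 1$ for each $j$, and if $(j,k) \in A$ then $j$ precedes $k$ in $\pi$, so $x_j < x_k$. Because there are only finitely many such inequalities and each is satisfied with strict inequality at $\x$, there exists $\epsilon > 0$ so small that the open Euclidean ball of radius $\epsilon$ around $\x$ still satisfies all of them. That ball is therefore contained in $\P(G)$, so $\x$ is an interior point of $\P(G)$ in $\R^n$, and hence $\dim \P(G) = n$.

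I do not anticipate a genuine obstacle here. The only ingredient that is not completely immediate is the existence of the compatible order $\pi$, but that is exactly the assertion that a directed acyclic graph admits a topological sort, which is already invoked in the text preceding the lemma; everything else is the routine observation that a finite system of linear inequalities admitting a strict solution has full-dimensional solution set. If one wants to avoid even mentioning topological sort, an alternative is to argue that $\P(G)$ is the closure of its interior since it is a polytope with the full-dimensional cube $[0,1]^n$ as its ambient constraint box, and then locate an interior point by perturbing any vertex of a maximal chain, but the explicit construction above seems cleanest.
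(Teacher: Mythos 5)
Your proof is correct and follows essentially the same route as the paper: both arguments rest on a topological order of the vertices compatible with $A$, the paper concluding by noting that $\P(G)$ contains the full-dimensional order simplex $\{0 \le x_{i_1} \le \dots \le x_{i_n} \le 1\}$, while you simply exhibit an explicit interior point of that simplex and invoke strictness of the finitely many defining inequalities.
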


\begin{proof}
The polytope $\P(G)$ depends only on the directed
graph $D=(V,A)$. Since $D$ is acyclic, there is a total order $i_1 \prec
i_2 \prec \dots \prec i_n$ on $[n]$ such that $j \prec k$ for all
$(v_j,v_k) \in A$. Thus $$\P(G) \supseteq \{ \x \in  \R^n: \, 0 \leq
x_{i_1} \leq x_{i_2} \leq \dots \leq x_{i_n} \leq 1 \}$$
which is a full-dimensional simplex in $\R^n$, so $\P(G)$ is
full-dimensional. 
\end{proof}

\begin{lemma} \label{lem:integral}For any acyclic mixed graph $G$, the
  inside-out polytope $(\P(G), \HH(G))$ is integral. 
\end{lemma}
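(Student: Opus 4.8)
The plan is to show that all vertices of the inside-out polytope $(\P(G), \HH(G))$ have integer (in fact $0/1$) coordinates. Recall that a vertex of $(\P(G), \HH(G))$ is a vertex of one of the (closed) regions $\overline{R} \cap \P(G)$, i.e., a point of $\P(G)$ that is the unique solution to a system consisting of $n$ linearly independent constraints, each chosen from: the box inequalities $x_i = 0$ or $x_i = 1$; the directed-edge relations $x_j = x_k$ for $(j,k) \in A$; and the hyperplanes $x_j = x_k$ for $\{j,k\} \in E$. So at such a vertex $\x$, $n$ of these equalities hold and their coefficient matrix has full rank.

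The first step is to observe that the equalities active at a vertex of the form $x_j = x_k$ (coming from either $A$ or $E$) partition the coordinates into blocks on which $\x$ is constant: say the active equalities of this type generate an equivalence relation on $[n]$ with classes $C_1, \dots, C_r$, and $\x$ takes the common value $c_s$ on $C_s$. Since the whole system has rank $n$, and the "equal" relations contribute rank $n - r$, we need $r$ further independent constraints among the box inequalities $x_i \in \{0,1\}$; because these only see the block of $i$, this forces each $c_s \in \{0,1\}$. Hence every coordinate of $\x$ is $0$ or $1$, so $\x \in \Z^n$, which is exactly integrality of the inside-out polytope.

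The second step is to check that such an $\x$ actually lies in $\P(G)$ — but this is automatic, since by construction $x_i \in [0,1]$ for all $i$, and the directed-edge inequalities $x_j \le x_k$ are satisfied: every vertex lies in some closed region $\overline{R}$, whose defining weak inequalities from $A$ hold there. (If no such vertex exists because $\P(G)$ is empty, there is nothing to prove; but by the previous lemma, since $G$ is acyclic, $\P(G)$ is full-dimensional, so vertices do exist and the arrangement genuinely subdivides it.) I would also remark that the acyclicity hypothesis is used twice: once to guarantee $\P(G)$ is nonempty/full-dimensional so that the statement is non-vacuous, and implicitly in that the regions whose vertices we examine are exactly those meeting $\P(G)$, as identified in Proposition \ref{prop:regions}.

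The step I expect to be the real content — though it is short — is the rank-counting argument in the first step: arguing that the $0/1$ constraints active at a vertex must "hit" every block of the equality-induced partition. The cleanest way to phrase this is via the block-diagonal structure of the active constraint matrix after reordering coordinates by blocks: within each block the only available non-"equality" constraints are $x_i = 0$ and $x_i = 1$, which are mutually exclusive, so each block contributes exactly one such constraint pinning its common value to $0$ or $1$, and a dimension count confirms there are exactly $r$ of them. Everything else is bookkeeping.
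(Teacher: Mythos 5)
Your argument is correct, but it takes a genuinely different route from the paper's. The paper disposes of the lemma in one stroke: every closed region of $(\P(G),\HH(G))$ is cut out by inequalities whose left-hand sides are vectors of the form $\e_j-\e_k$ or $\pm\e_j$, any matrix with such rows is totally unimodular, and total unimodularity together with the integer right-hand sides $0$ and $1$ forces all vertices to be integral, citing Schrijver for that last implication. You instead prove the needed special case of that implication by hand: at a vertex of a closed region the active equalities $x_j=x_k$ (tight arcs from $A$ together with hyperplanes of $\HH(G)$) induce a partition of the coordinates into $r$ blocks of constant value and have rank exactly $n-r$, so reaching full rank $n$ requires $r$ further independent box constraints $x_i=0$ or $x_i=1$; since such a constraint only pins the common value of the block containing $i$ (and constraints within one block are dependent modulo the equalities), every block must be pinned, hence every coordinate of the vertex is $0$ or $1$. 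Both arguments are sound; yours is elementary and self-contained and makes explicit that the vertices are $0/1$ vectors, while the paper's is shorter and transfers immediately to any polyhedron defined by a network-type constraint matrix. Two cosmetic remarks: your appeal to acyclicity (via the previous lemma) to avoid vacuousness is unnecessary, since by either argument the integrality holds regardless of whether $\P(G)$ is full-dimensional or even nonempty; and the list of active constraints should also include $x_i\le 1$, whose rows $\e_i$ affect neither your rank count nor the paper's total-unimodularity claim.
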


\begin{proof} Any closed region of $(\P(G), \HH(G))$ is defined by a
  collection of inequalities of the form $x_j - x_k \leq 0$ and $x_j
  \geq 0$. In particular, the left-hand sides of these inequalities
  are all vectors of the form $\e_j - \e_k$ or $-\e_j$. Any matrix whose
  rows are all such vectors is totally unimodular. This implies that all
  vertices of the region are integral~\cite{schrijver}. 
\end{proof}

\begin{proof}[Proof of Theorem~\ref{thm:mixedgraph}]
Set $\P:=\P(G)$ and $\HH:=\HH(G)$ . 
Using the identification of maps $c: V \to \Z$ with lattice points
in $\R^n$, we see that for $t \geq 1$, $c$ is a $(t-1)$-coloring (respecting the
directed edges $A$) if $c \in t\P^\circ(G)$ and $c \notin \HH$. That is,
$\chi_G(t-1) = E^\circ_{\P^\circ,\HH}(t)$, which equals $(-1)^n E_{\P,\HH}(-t)$ by \eqref{iopreciprocity}.
Using Lemma~\ref{lem:integral}, we see that the functions $E_{\P,\HH}$ and
$E^\circ_{\P^\circ,\HH}$ are polynomials and hence the reciprocity extends to
all values of $t$. In particular, for $s$ a positive integer we have 
$\chi_G(-s-1) = E^\circ_{\P^\circ,\HH}(-s) = (-1)^n E_{\P,\HH}(s)$. Now 
$E_{\P,\HH}(s)$ counts each point $c \in s\P$ with multiplicity equal to
the number of closed regions of $\HH$ that intersect the interior of
$\P$ and contain $c$. By Proposition~\ref{prop:regions}, regions
correspond to acyclic orientations of $G$. The closed region
corresponding to an orientation $\alpha$ contains $c$ if and only if $c(u)
\leq c(v)$ for each edge $u \to v$ in $\alpha$. 
\end{proof}

\begin{example} Let $G = \left([3], \{ \{1,3\}, \{2,3\} \}, \{ (1,2)
    \}\right)$, a triangle with one directed edge and two undirected
  edges. Of the four orientations of $G$, three are acyclic and each
  of these three determines a total order $\prec$ on the
  vertices. These three orders are: $$ 1 \prec 2 \prec 3,\, 1 \prec 3 \prec 2,\, 3 \prec 1 \prec 2 \, .$$ 


\noindent
The polytope $\P(G)$ is the half-cube given by the five inequalities 
$$ 0 \leq x_1 \leq x_2 \leq 1, \, 0 \leq x_3 \leq 1 \, .$$
The arrangement $\HH(G)$ consists of the two hyperplanes $x_1 = x_3$ and
$x_2 = x_3$, and the inside-out polytope $(\P(G), \HH(G))$ has three open
regions in bijection with the acyclic orientations of $G$:
\begin{align*}
R_{123} &= \left\{\x \in \R^3: 0 < x_1 < x_2 < x_3 < 1 \right\} \\
R_{132} &= \left\{\x \in \R^3: 0 < x_1 < x_3 < x_2 < 1 \right\} \\
R_{312} &= \left\{\x \in \R^3: 0 < x_3 < x_1 < x_2 < 1 \right\} .
\end{align*}

For this graph it is easy to compute $\chi_G(t)$ directly. In any coloring $c$, we must
choose $c(1) < c(2)$ and then we can choose $c(3)$ to be any remaining
color. Thus there are $ {t \choose 2} (t-2) = \frac{t(t-1)(t-2)}{2}$
$t$-colorings for any non-negative integer $t$. Then $(-1)^3\chi_G(-1) =
- \frac{(-1)(-2)(-3)}{2} = 3$, which is the number of regions of
$(\P(G), \HH(G))$ as stated in Theorem~\ref{thm:mixedgraph}. 

As we know from the proof of Theorem~\ref{thm:mixedgraph}, we can also
use Ehrhart reciprocity to interpret the evaluation of $\chi_G$ at other negative integers. Specifically, $- \chi_G(-t)$  is the number of lattice points in
$(t-1)\P$ with multiplicities, where the multiplicity of each point is
the number of closed regions of the inside-out polytope that contain
it. For instance, $- \chi_G(-2) = (-1)\frac{(-2)(-3)(-4)}{2} = 12$ and
$- \chi_G(-3) = (-1) \frac{(-3)(-4)(-4)}{2} = 30$. In
Table~\ref{tab:triangle}, we list the lattice points in $\P$ and $2\P$
and the regions that contain each point. Indeed, the multiplicities add
up to 12 and 30, respectively. 

Note that in this example, $\HH(G)$ is not transverse to $\P(G)$. The
two hyperplanes $x_1=x_3, x_2=x_3$ in $\HH(G)$ meet in a flat given by
$x_1=x_2=x_3$. This flat intersects only the boundary of $\P$ since the
equation $x_1=x_2$ is a supporting hyperplane of~$\P$. 
\end{example}

\begin{table}
\begin{tabular}{|c|c|c|} \hline
point $\z \in \P \cap \Z^3$ & regions containing $\z$ & multiplicity
of $\z$ \\ \hline
$(0,0,0)$ & $R_{123}, R_{132}, R_{312}$ & 3 \\
$(0,0,1)$ & $R_{123}$ & 1 \\
$(0,1,0)$ & $R_{132}, R_{312}$ & 2 \\
$(0,1,1)$ & $R_{123}, R_{132}$ & 2 \\
$(1,1,0)$ & $R_{312}$ & 1 \\
$(1,1,1)$ & $R_{123}, R_{132}, R_{312}$ & 3 \\ \hline
\end{tabular}
\vskip0.5cm

\begin{tabular}{|c|c|c|} \hline
point $\z \in 2\P \cap \Z^3$ & regions containing $\z$ & multiplicity
of $\z$ \\ \hline
$(0,0,0)$ & $R_{123}, R_{132}, R_{312}$ & 3 \\
$(0,0,1)$ & $R_{123}$ & 1 \\
$(0,0,2)$ & $R_{123}$ & 1 \\
$(0,1,0)$ & $R_{132}, R_{312}$ & 2 \\
$(0,1,1)$ & $R_{123}, R_{132}$ & 2 \\
$(0,1,2)$ & $R_{123}$ & 1 \\
$(0,2,0)$ & $R_{132}, R_{312}$ & 2 \\
$(0,2,1)$ & $R_{132}$ & 1 \\
$(0,2,2)$ & $R_{132}, R_{132}$ & 2 \\
$(1,1,0)$ & $R_{312}$ & 1 \\
$(1,1,1)$ & $R_{123}, R_{132}, R_{312}$ & 3 \\ 
$(1,1,2)$ & $R_{123}$ & 1 \\ 
$(1,2,0)$ & $R_{312}$ & 1 \\ 
$(1,2,1)$ & $R_{132}, R_{312}$ & 2 \\ 
$(1,2,2)$ & $R_{123}, R_{132}$ & 2 \\ 
$(2,2,0)$ & $R_{312}$ & 1 \\ 
$(2,2,1)$ & $R_{312}$ & 1 \\ 
$(2,2,2)$ & $R_{123}, R_{132}, R_{312}$ & 3 \\ 
\hline
\end{tabular}
\caption{Lattice points in $\P$ and $2\P$ and the regions that contain each point.}\label{tab:triangle}
\end{table}


\section{Open Problems} \label{open}

The search for optimal Golomb rulers remains a challenging open problem. This problem is related to our paper: the smallest positive integer that is not a root of $g_m(t)$ gives the length of an optimal Golomb ruler with $m+1$ markings.
Our methods present new open problems, from the computation of $g_m(t)$ to specific questions, e.g., about the period of $g_m(t)$ and the number of combinatorially different Golomb rulers with a given number of markings (this sequence starts with 1, 2, 10, 114, 2608, and 107498).
Our reciprocity theorem for chromatic polynomials of mixed graphs suggest open problems about these polynomials, from classification problems (which polynomials are chromatic polynomials of mixed graphs?) to interpretations of and relations between its coefficients.


\bibliographystyle{plain}

\setlength{\parskip}{0cm} 

\end{document}